\newcommand{\DD}{\hbox{D\kern-.73em\raise.25ex\hbox{-}\raise-.25ex\hbox{ }}}
\newcommand{\dD}{\hbox{d\kern-.33em\raise.75ex\hbox{-}\raise-.25ex\hbox{}}}
\newtheorem{theorem}{Theorem}[section]
\newtheorem{lemma}[theorem]{Lemma}
\newtheorem{corollary}[theorem]{Corollary}
\theoremstyle{definition}
\theoremstyle{remark}
\numberwithin{equation}{section}
\begin{document}
\setcounter{page}{1}

\title[On modulus of noncompact convexity ...]{On modulus of noncompact convexity for a strictly minimalizable measure of noncompactness}

\author[A. Reki\' c-Vukovi\' c, N. Oki\v ci\' c, I. Aran\DD elovi\' c]{Amra Reki\' c-Vukovi\' c$^1$ Nermin Oki\v ci\'c$^2$ $^{1}$
Ivan Aran\DD elovi\' c$^{2}$}

\address{$^{1}$ Department of Mathematics, University of Tuzla, Univerzitetska 4, Tuzla, Bosnia and Hercegovina.}
\email{\textcolor[rgb]{0.00,0.00,0.84}{amra.rekic@untz.ba}}\email{\textcolor[rgb]{0.00,0.00,0.84}{nermin.okicic@untz.ba}}

\address{$^{2}$ University of Belgrade - Faculty of Mechanical
Engineering, \\ Kraljice Marije 16, 11000 Belgrade, Serbia}
\email{\textcolor[rgb]{0.00,0.00,0.84}{iarandjelovic@mas.bg.ac.rs}}
%%%%%%%%%%%%%%%%%%%%%%%%%%%%%%%%%%%%%%%%%%%%%%%%%%%%%%%%%%%%%%%%%%%%%%%%%%%%%%%%%%%%%%%%%%%%%%%%%%%%%%%%%%%%%%%%%%%%%%%%
\subjclass[2010]{46B20; 46B22.}

\keywords{modulus of noncompact convexity, strictly minimalizable measure of noncompactness, continuity}

\date{Received: xxxxxx; Revised: yyyyyy; Accepted: zzzzzz.}

\begin{abstract}
In this paper we consider modulus of noncompact convexity $\Delta_{X,\phi}$ associated with the strictly minimalizable
measure of noncompactness $\phi$. We also give some its properties and show its continuity on the interval
$[0,\phi(\overline{B}_X))$.
\end{abstract} \maketitle

\section{Introduction and preliminaries}

The theory of measures of noncompactness has many applications in Topology, Functional analysis and Operator theory.
There are many noneq\-ui\-valent definitions of this notion on metric and topological spaces
\cite{Akhmerov},\cite{Banas3}. First of them was introduced by Kuratowski in 1930. The most important examples of such
functions are: Kuratowski's measure ($\alpha$), Hausdorff's measure ($\chi$) and measure of Istratescu ($\beta$).

One of the tools that provides classification of Banach spaces considering their geometrical properties is the modulus
of convexity \cite{Clarkson}. Its natural generalization is the notion of noncompact convexity, introduced by K. Goebel
and T. Sekowski \cite{Goebel-Sekowski}. Their modulus was generated with Kuratowski's measure of noncompactness
($\alpha$). Modulus associated with  Hausdorff's measure of noncompactness ($\chi$), was introduced by Banas \cite
{Banas} and modulus associated with Istratescu's measure of noncompactness ($\beta$) by Dominguez-Benavides and Lopez
\cite{DL2}. In  \cite{Prus}, \cite{Ayerbe_knjiga} was presented an abstract unified to this notions, which consider
modulus of noncompact convexity $\Delta_{X,\phi}$ associated with arbitrary abstract measure of noncompactness $\phi$.

Banas \cite {Banas} proved that modulus $\Delta _{X,\chi }(\varepsilon)$ is subhomogeneous function in the case of
reflexive space $X$. Moreover, Prus \cite{Prus} gave the result connecting continuity of the modulus
$\Delta_{X,\phi}(\varepsilon)$ and uniform Opial condition which imply a normal structure of the space.

In this paper, we shall prove that modulus $\Delta _{X,\phi }(\varepsilon)$ is subhomogeneous and a continuous function
on the interval $[0,\phi (B_X))$, for an arbitrary strictly minimalizable measure of noncompactness $\phi$, where $X$
is Banach space having the Radon-Nikodym property.

\section{Preliminaries}

Let $X$ be Banach space, $B(x,r)$ denotes the open ball centered at $x$ with radius $r$, and $B_X$ and $S_X$ denote the
unit ball and sphere in the given space, respectively. If $A\subset X$ with $\overline{A}$ and $coA$ we denote closure
of the set $A$, that is convex hull of $A$.

Let $\mathfrak{B}$ be the collection of bounded subsets of the space $X$. Then function $\phi:\mathfrak{B}\rightarrow$
$[0,+\infty)$ with properties:

\begin{enumerate}

\item $\phi (B)=0\Leftrightarrow B$ is precompact set,

\item $\phi (B)=\phi (\overline{B})$, $\forall B\in$$\mathfrak{B}$,

\item $\phi (B_1\cup B_2)=\max \{ \phi (B_1),\phi (B_2)\}$, $\forall B_1,B_2\in$$\mathfrak{B}$,

\end{enumerate} is the measure of noncompactness defined on $X$.
For more about properties of the measure of noncompactness see in \cite{Akhmerov} and \cite{Ayerbe_knjiga}.

Let $\phi$ be a measure of noncompactness. Infinite set $A \in \mathfrak{B}$ is $\phi$-minimal if and only if $\phi(A)
= \phi(B)$ for any infinite set $B\subseteq A$.

We say that the measure of noncompactness $\phi$ is minimalizable if for every infinite, bounded set $A$ and for all
$\varepsilon>0$, there exists $\phi$-minimal set $B\subset A$, such that $\phi(B)\geq \phi(A)-\varepsilon$. Measure
$\phi$ is strictly minimalizable if for every infinite, bounded set $A$,
there exists $\phi$-minimal set $B\subset A$ such that $\phi(B)=\phi(A)$.\\
The modulus of noncompact convexity associated to arbitrary measure of noncompactness $\phi$ is the function
$\Delta_{X,\phi}:[0,\phi(B_X)]\rightarrow [0,1]$, defined with
$$\Delta_{X,\phi}(\varepsilon)=\inf \{ 1-d(0,A): A\subseteq \overline{B_X}, A=coA=\overline{A}, \phi(A)\geq \varepsilon\} \ .$$

Banas \cite{Banas} considered modulus $\Delta_{X,\phi}(\varepsilon)$ for $\phi=\chi $, where $\chi$ is Hausdorff
measure of noncompactness. For $\phi =\alpha$, $\alpha$ is Kuratowski measure of noncompactness,
$\Delta_{X,\alpha}(\varepsilon)$ presents Gobel-Sekowski modulus of noncompact convexity \cite{Goebel-Sekowski}, while
for $\phi=\beta$, where $\beta$ is a separation measure of noncompactness, $\Delta_{X,\beta}(\varepsilon)$ is
Dominguez Benavides -Lopez modulus of noncompact convexity \cite{DL2}.\\
The characteristic of noncompact convexity of $X$ associated to the measure of noncompactness $\phi$ is defined to be
$$\varepsilon_{\phi}(X)=\sup\{ \varepsilon\geq 0: \Delta_{X,\phi}(\varepsilon)=0\} \ .$$ Inequalities
$$\Delta_{X,\alpha}(\varepsilon)\leq \Delta_{X,\beta}(\varepsilon)\leq \Delta_{X,\chi}(\varepsilon) \ ,$$
hold for moduli $\Delta_{X,\phi}(\varepsilon)$ concerning $\phi = \alpha, \chi, \beta$, and consequently
$$\varepsilon _{\alpha} (X)\geq \varepsilon_{\beta} (X)\geq \varepsilon_{\chi}(X) \ .$$

Banach space $X$ has Radon-Nikodym property if and only if every nonempty, bounded set $A\subset X$ is dentable, that
is if and only if for all $\varepsilon >0$ there exists $x\in A$, such that $x\notin \overline{co}(A\backslash
\overline{B}(x,\varepsilon))$, \cite{Phelps}.

%%%%%%%%%%%%%%%%%%%%%%%%%%%%%%%%%%%%%%%%%%%%%%%%%%%%%%%%%%%%%%%%%%%%%%%%%%%%%%%%%%%%%%%%%%%%%%%%%%%%%
\section{Main results}

We started with our main result which is partial generalizations of earlier result obtained by Banas \cite{Banas}.

\begin{theorem}\label{subhomogenost}
Let $X$ be a Banach space with Radon-Nikodym property and $\phi$ strictly minimalizable measure of noncompactness. The
modulus $\Delta_{X,\phi}(\varepsilon)$ is subhomogeneous function, that is
\begin{equation}\label{th_subhomogenost_nejednakost}
\Delta_{X,\phi}(k\varepsilon)\leq k\Delta_{X,\phi}(\varepsilon),
\end{equation}
for all $k\in [0,1]$, $\varepsilon \in [0,\phi (\overline{B}_X)]$.
\end{theorem}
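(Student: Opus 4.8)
The plan is to fix $\varepsilon\in[0,\phi(\overline{B}_X)]$ and $k\in[0,1]$, and to estimate $\Delta_{X,\phi}(k\varepsilon)$ from above by exhibiting, for every admissible set $A$ competing in the infimum defining $\Delta_{X,\phi}(\varepsilon)$, a rescaled set $A_k$ that competes in the infimum defining $\Delta_{X,\phi}(k\varepsilon)$ and whose distance to the origin is controlled. The natural candidate is the homothety $A_k=(1-k)\,x_0+kA$ for a suitable fixed point $x_0$, or more simply $A_k=kA$; the point of choosing the center carefully is to keep $A_k\subseteq\overline{B}_X$ while making $d(0,A_k)$ as large as possible. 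First I would record the scaling behaviour of the measure of noncompactness under homotheties: for a strictly minimalizable (hence, one expects, homogeneous) measure $\phi$ one has $\phi(kA)=k\,\phi(A)$, so that $\phi(A)\geq\varepsilon$ forces $\phi(A_k)\geq k\varepsilon$, placing $A_k$ in the correct competition class; I would also note that $A_k$ inherits convexity and closedness from $A$.

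Next I would translate a near-optimal competitor. Given $\eta>0$, pick $A$ with $A=coA=\overline A\subseteq\overline B_X$, $\phi(A)\geq\varepsilon$, and $1-d(0,A)\leq\Delta_{X,\phi}(\varepsilon)+\eta$; equivalently $d(0,A)\geq 1-\Delta_{X,\phi}(\varepsilon)-\eta$. Let $a\in A$ realize (approximately) this distance. The idea is to take $A_k=(1-k)a+kA$. Then $A_k$ is convex, closed, and since $a\in\overline B_X$ and $A\subseteq\overline B_X$, convexity of $\overline B_X$ gives $A_k\subseteq\overline B_X$. For the distance, every point of $A_k$ has the form $(1-k)a+kx$ with $x\in A$, so
\begin{equation*}
\|(1-k)a+kx\|\geq (1-k)\|a\|+k\,d(0,A)-\text{(error)}\geq d(0,A)-\text{(error)},
\end{equation*}
using $\|a\|\approx d(0,A)\leq\|x\|$ is not quite the right inequality, so more carefully one estimates $\|(1-k)a+kx\|\geq d(0,A)$ is false in general; instead one uses that the segment from $a$ to $x$ stays outside the open ball $B(0,d(0,A))$ only at $a$, so I would instead translate by a support functional: choose a norming functional $f$ with $\|f\|=1$ and $f(x)\geq d(0,A)$ for all $x\in A$ up to $\eta$ (possible since $A$ is convex closed and $0\notin \mathrm{int}\,A$ region), giving $f((1-k)a+kx)\geq d(0,A)-\eta$, hence $d(0,A_k)\geq d(0,A)-\eta$.

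Finally I would assemble the bound: for the measure side I need $\phi(A_k)=\phi((1-k)a+kA)=\phi(kA)=k\phi(A)\geq k\varepsilon$ (translation invariance plus positive homogeneity of $\phi$, which I would justify from the strict minimalizability hypothesis or cite as a standard property). Therefore $A_k$ is admissible for $\Delta_{X,\phi}(k\varepsilon)$, so
\begin{equation*}
\Delta_{X,\phi}(k\varepsilon)\leq 1-d(0,A_k)\leq 1-d(0,A)+\eta\leq 1-\bigl(1-\Delta_{X,\phi}(\varepsilon)-\eta\bigr)+\eta=\Delta_{X,\phi}(\varepsilon)+2\eta,
\end{equation*}
which is weaker than what we want. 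The gain of the factor $k$ must come from a better distance estimate: writing $d(0,A_k)\geq\inf_{x\in A}\|(1-k)a+kx\|$ and using that $\|(1-k)a+kx\|\geq\|a\|-k\|a-x\|$ is the wrong direction, whereas $\|(1-k)a+kx\|\geq (1-k)\|a\|+k\|x\|-\text{cross terms}$ is not valid for norms; the correct route is $f((1-k)a+kx)=(1-k)f(a)+kf(x)$ and to note $f(a)$ can be taken $\ge 1-\eta$ is false too. The honest computation is $d(0,A_k)\ge (1-k)\cdot 1 + k\,d(0,A)$ is what one wants but requires $\|a\|=1$, i.e.\ one should pick $a\in A\cap S_X$; if $A$ meets the sphere this works and yields $1-d(0,A_k)\le 1-(1-k)-k\,d(0,A)=k(1-d(0,A))\le k(\Delta_{X,\phi}(\varepsilon)+\eta)$, giving the theorem after $\eta\to 0$. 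Thus the main obstacle — and the step deserving the most care — is the choice of the homothety center: one must argue (using convexity of $A$ and $A\subseteq\overline B_X$, and possibly the Radon–Nikodym property to produce denting/extreme-type points on the sphere, or else replace $A$ by its intersection with a supporting hyperplane of $\overline B_X$) that without loss of generality the near-optimal competitor $A$ contains a unit vector $a$, or more robustly that $d(0,(1-k)a+kA)\ge(1-k)+k\,d(0,A)$ for an appropriately chosen boundary point $a$; the homogeneity and translation invariance of $\phi$ and the stability of the class $\{A=coA=\overline A\}$ under affine maps are then routine.
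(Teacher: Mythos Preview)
Your overall strategy---scale $A$ by $k$ and translate to recover distance---is the right shape, and it is essentially what the paper does. But the step you flag as ``the main obstacle'' is not merely delicate; as written it is false. The inequality
\[
d\bigl(0,(1-k)a+kA\bigr)\ \ge\ (1-k)+k\,d(0,A)
\]
does \emph{not} follow from $a\in A\cap S_X$. In $\ell_2^2$ take $A=\{(t,\tfrac12):|t|\le \tfrac{\sqrt3}{2}\}$, so $d(0,A)=\tfrac12$ and $a=(\tfrac{\sqrt3}{2},\tfrac12)\in A\cap S_X$. With $k=\tfrac12$ one gets $(1-k)a+kA=\{(s,\tfrac12):0\le s\le \tfrac{\sqrt3}{2}\}$, whose distance to $0$ is $\tfrac12$, not $\tfrac34$. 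So merely finding a unit vector in $A$ (with or without RNP) cannot rescue the estimate; what you would need is a norming functional $f$ with $f(a)=1$ \emph{and} $f\ge d(0,A)$ on $A$, and there is no reason such an $a$ exists.

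The paper handles this differently, and this is where both hypotheses actually enter. After choosing a near-optimal $A$ it passes to $kA$, then uses strict minimalizability to extract an infinite $\phi$-minimal $B\subset kA$ with $\phi(B)=\phi(kA)\ge k\varepsilon$; the point of $\phi$-minimality is that one can delete any finite-radius slice of $B$ without lowering $\phi$. RNP (dentability) then produces $x_0\in B$ with $x_0\notin C:=\overline{co}\,[B\setminus\overline B(x_0,r)]$, and $\phi(C)=\phi(B)\ge k\varepsilon$ still. The competitor for $\Delta_{X,\phi}(k\varepsilon)$ is the \emph{translate} $B^*=C+\tfrac{1-k}{\|x_0\|}\,x_0$ (not a homothety centred at a sphere point), and the distance bookkeeping is done via $d(0,B^*)$ versus $d(0,C)\ge d(0,kA)=k\,d(0,A)$. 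In short: you used RNP only as a vague hint and never used strict minimalizability, whereas in the paper both are doing real work---minimalizability to keep $\phi$ unchanged after excising a ball, and dentability to manufacture the translation direction $x_0$ lying outside $C$. Your sketch, as it stands, does not close this gap.
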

\begin{proof}
Let $\eta >0$ be arbitrary and $\varepsilon \in [0,\phi (\overline{B}_X)]$. From the definition of the modulus
$\Delta_{X,\phi }(\varepsilon)$ there exists convex, closed set $A\subseteq \overline{B}_X$, $\phi(A)\geq \varepsilon$
such that
\begin{equation}\label{th_subhomogenost_nejdn1}
1-d(0,A)< \Delta_{X,\phi}(\varepsilon)+\eta \ .
\end{equation}

The set $kA\subseteq \overline{B}_X$ is convex and closed for arbitrary $k\in [0,1]$ and $\phi (kA)=k\phi (A)\geq
k\varepsilon$. Since $\phi$ is strictly minimalizable measure of noncompactness there is infinite $\phi$-minimal set
$B\subset kA$ such that $$\phi (B)=\phi (kA)\geq k\varepsilon \ .$$ Let $n_0\in \mathbb{N}$ be such that $\displaystyle
\frac{k\varepsilon }{n_0}<\frac{diam B}{2}$. Since the set $B$ is bounded subset of Banach space $X$ which has
Radon-Nikodym property, we conclude that for $\displaystyle r=\frac{k\varepsilon}{n_0}$ there exists $x_0\in B$ such
that
$$x_0 \notin \overline{co}\left [B\backslash \overline{B}\left (x_0, r \right ) \right ] \ ,$$
that is
$$\overline{co}\left [B\backslash \overline{B}\left (x_0,r \right )\right ]\subset B\subset kA \ .$$
We consider the set $\displaystyle C=\overline{co}\left [B\backslash \overline{B}\left (x_0, r\right )\right ]$. $C$ is
closed and convex set and because of the properties of the strictly minimalizable measure of noncompactness we have
$$\phi (C)=\phi \left [B\backslash \overline{B}\left (x_0, r\right )\right ]=\phi (B)\geq k\varepsilon \ .$$
Moreover,
\begin{equation}\label{th_subhomogenost_nejdn2}
1-d(0,C)\leq 1-d(0,kA)=1-kd(0,A).
\end{equation}

We define the set $\displaystyle B^*=C+\frac{1-k}{\|x_0\|}x_0$. $B^*$ is a convex and closed set and arbitrary $z\in
B^*$ is of the form $\displaystyle z=y+\frac{1-k}{\|x_0\|}x_0$, where $y\in C\subset kA$ and $\|z\|< 1$. So,
$B^*\subset \overline{B}_X$. From the properties of the measure of noncompactness $\phi$ we have that
$$\phi (B^*)=\phi \left (C+\frac{1-k}{\|x\|}x\right )=\phi (C)\geq k\varepsilon \ .$$
Since $d(0,B^*)= d(0,C)+1-k$, than $1-d(0,B^*)=k-d(0,C)\leq k-kd(0,A)$ holds, so using inequality
(\ref{th_subhomogenost_nejdn1}) we have
$$1-d(0,B^*)<k(\Delta _{X,\phi }(\varepsilon)+\eta) \ .$$
If we take infimum by all sets $B$, such that $\phi (B)\geq k\varepsilon$, we have
$$\Delta _{X,\phi}(k\varepsilon)\leq k\Delta _{X,\phi }(\varepsilon)+ k\eta \ .$$
Since $\eta >0$ can be choosen arbitrarily small we obtain
$$\Delta _{X,\phi}(k\varepsilon)\leq k\Delta _{X,\phi }(\varepsilon) .$$
\end{proof}

As applications of Theorem \label{subhomogenost} we shall state the following corollaries.

\begin{corollary}
Let $\phi$ be a strictly minimalizable measure of noncompactness defined on space $X$ with Radon-Nikodym property. The
function $\Delta_{X,\phi}(\varepsilon)$ is strictly increasing on the interval
$[\varepsilon_{\phi}(X),\phi(\overline{B}_X)]$.
\end{corollary}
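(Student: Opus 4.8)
The plan is to obtain strict monotonicity as an almost immediate consequence of the subhomogeneity inequality \eqref{th_subhomogenost_nejednakost} of Theorem \ref{subhomogenost}, combined with the elementary fact that $\Delta_{X,\phi}$ is nondecreasing. First I would record the latter: if $0\le\varepsilon_1\le\varepsilon_2\le\phi(\overline{B}_X)$, then every closed convex set $A\subseteq\overline{B}_X$ with $\phi(A)\ge\varepsilon_2$ automatically satisfies $\phi(A)\ge\varepsilon_1$, so the infimum defining $\Delta_{X,\phi}(\varepsilon_1)$ ranges over a larger family of admissible sets and is therefore no larger than $\Delta_{X,\phi}(\varepsilon_2)$; hence $\Delta_{X,\phi}$ is nondecreasing on $[0,\phi(\overline{B}_X)]$. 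From this and the definition $\varepsilon_{\phi}(X)=\sup\{\varepsilon\ge 0:\Delta_{X,\phi}(\varepsilon)=0\}$ it follows that $\Delta_{X,\phi}(\varepsilon)>0$ for every $\varepsilon$ with $\varepsilon_{\phi}(X)<\varepsilon\le\phi(\overline{B}_X)$: otherwise such an $\varepsilon$ would lie in the set $\{\varepsilon'\ge 0:\Delta_{X,\phi}(\varepsilon')=0\}$ and could not exceed its supremum $\varepsilon_{\phi}(X)$, a contradiction.

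Next I would fix $\varepsilon_{\phi}(X)\le\varepsilon_1<\varepsilon_2\le\phi(\overline{B}_X)$ and set $k=\varepsilon_1/\varepsilon_2$, which is well defined and lies in $[0,1)$ since $\varepsilon_2>\varepsilon_1\ge 0$ forces $\varepsilon_2>0$, and moreover $k\varepsilon_2=\varepsilon_1$. Applying Theorem \ref{subhomogenost} with this $k$ and with $\varepsilon_2$ in place of $\varepsilon$ yields
\[
\Delta_{X,\phi}(\varepsilon_1)=\Delta_{X,\phi}(k\varepsilon_2)\le k\,\Delta_{X,\phi}(\varepsilon_2).
\]
Since $\varepsilon_2>\varepsilon_1\ge\varepsilon_{\phi}(X)$, the first step gives $\Delta_{X,\phi}(\varepsilon_2)>0$, and as $k<1$ we conclude $\Delta_{X,\phi}(\varepsilon_1)\le k\,\Delta_{X,\phi}(\varepsilon_2)<\Delta_{X,\phi}(\varepsilon_2)$. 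This is precisely the assertion that $\Delta_{X,\phi}$ is strictly increasing on $[\varepsilon_{\phi}(X),\phi(\overline{B}_X)]$.

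I do not expect a substantial obstacle here, since the whole argument is a short deduction from Theorem \ref{subhomogenost}; the only points needing a little care are the degenerate case $\varepsilon_1=0$ (which can occur only when $\varepsilon_{\phi}(X)=0$, and is harmless because then $k=0$ and $\Delta_{X,\phi}(0)=0<\Delta_{X,\phi}(\varepsilon_2)$, the value $\Delta_{X,\phi}(0)=0$ being attained on any singleton in $S_X$), and the verification that $\Delta_{X,\phi}$ is strictly positive to the right of $\varepsilon_{\phi}(X)$, which is exactly where the non-strict monotonicity and the supremum definition of the characteristic $\varepsilon_{\phi}(X)$ enter.
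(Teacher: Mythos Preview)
Your argument is correct and follows essentially the same route as the paper: set $k=\varepsilon_1/\varepsilon_2\in[0,1)$ and apply the subhomogeneity inequality of Theorem~\ref{subhomogenost} to obtain $\Delta_{X,\phi}(\varepsilon_1)\le k\,\Delta_{X,\phi}(\varepsilon_2)<\Delta_{X,\phi}(\varepsilon_2)$. You are in fact more careful than the paper's one-line proof, since you explicitly justify the final strict inequality by showing $\Delta_{X,\phi}(\varepsilon_2)>0$ whenever $\varepsilon_2>\varepsilon_{\phi}(X)$, a point the paper takes for granted.
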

\begin{proof}
Let $\varepsilon_1$, $\varepsilon_2\in [\varepsilon_{\phi}(X),\phi(\overline{B}_X)]$ and $\varepsilon_1<\varepsilon_2$.
If we put $\displaystyle k=\frac{\varepsilon_1}{\varepsilon_2}<1$, then by the Theorem  \ref{subhomogenost} we obtain
$$\Delta_{X,\phi}(\varepsilon_1)=\Delta_{X,\phi}(k\varepsilon_2)\leq
k\Delta_{X,\phi}(\varepsilon_2)<\Delta_{X,\phi}(\varepsilon_2) \ .$$
\end{proof}

\begin{corollary}
Let $\phi$ be a strictly minimalizable measure of noncompactness defined on space $X$ with Radon-Nikodym property.
Inequality $$\Delta_{X,\phi}(\varepsilon)\leq \varepsilon \ $$ holds for all $\varepsilon \in
[0,\phi(\overline{B}_X)]$.
\end{corollary}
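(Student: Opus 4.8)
The plan is to obtain this directly from Theorem~\ref{subhomogenost}: rather than constructing a set that realizes the modulus, I would compare an arbitrary $\varepsilon$ with the right endpoint $\phi(\overline{B}_X)$ of the domain, apply the subhomogeneity inequality (\ref{th_subhomogenost_nejednakost}) with the appropriate contraction factor, and then invoke the bound $\Delta_{X,\phi}\le 1$ that is built into the definition of the modulus.

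Concretely, I would argue as follows. For $\varepsilon=0$, applying (\ref{th_subhomogenost_nejednakost}) with $k=0$ gives $\Delta_{X,\phi}(0)=\Delta_{X,\phi}(0\cdot 0)\le 0$, hence $\Delta_{X,\phi}(0)=0$; this also takes care of the degenerate case $\phi(\overline{B}_X)=0$, where the domain of the modulus is just $\{0\}$. For the main case fix $\varepsilon\in(0,\phi(\overline{B}_X)]$ and put $M:=\phi(\overline{B}_X)>0$ and $k:=\varepsilon/M\in(0,1]$. Since $kM=\varepsilon$ and $M$ belongs to the domain, Theorem~\ref{subhomogenost} yields
$$\Delta_{X,\phi}(\varepsilon)=\Delta_{X,\phi}(kM)\le k\,\Delta_{X,\phi}(M)\le k=\frac{\varepsilon}{\phi(\overline{B}_X)},$$
the middle inequality being nothing but $\Delta_{X,\phi}(M)\le 1$. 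This already gives $\Delta_{X,\phi}(\varepsilon)\le\varepsilon/\phi(\overline{B}_X)$ for every $\varepsilon$ in the domain.

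The only step that is not purely formal — and the one I would flag as the potential obstacle — is the final passage $\varepsilon/\phi(\overline{B}_X)\le\varepsilon$, which requires $\phi(\overline{B}_X)\ge 1$. This normalization holds for the classical measures in infinite dimensions (for instance $\chi(\overline{B}_X)=1$ and $\alpha(\overline{B}_X)=2$), which is anyway the only interesting range since for $\phi(\overline{B}_X)=0$ the assertion is vacuous. If one prefers not to assume it, the computation above still delivers the sharper, scale-invariant estimate $\Delta_{X,\phi}(\varepsilon)\le\varepsilon/\phi(\overline{B}_X)$, from which the stated inequality follows at once once $\phi(\overline{B}_X)\ge 1$ is known.
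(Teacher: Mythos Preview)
Your argument is correct and uses the same core tool as the paper (the subhomogeneity inequality together with the trivial bound $\Delta_{X,\phi}\le 1$), but you anchor at a different pivot. The paper splits into two ranges: for $\varepsilon\in[0,1]$ it applies Theorem~\ref{subhomogenost} with $k=\varepsilon$ and the pivot $1$ to get $\Delta_{X,\phi}(\varepsilon)\le\varepsilon\,\Delta_{X,\phi}(1)\le\varepsilon$, while for $\varepsilon\in(1,\phi(\overline{B}_X)]$ it simply invokes monotonicity and $\Delta_{X,\phi}\le 1<\varepsilon$. Your single-pivot approach at $M=\phi(\overline{B}_X)$ avoids the case split and produces the sharper, scale-invariant estimate $\Delta_{X,\phi}(\varepsilon)\le\varepsilon/\phi(\overline{B}_X)$ as a byproduct. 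Both arguments need $\phi(\overline{B}_X)\ge 1$ --- the paper to ensure that $1$ lies in the domain of the modulus, you for the final division step --- and you are right to make this normalization explicit; the paper leaves it tacit.
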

\begin{proof}
If $\varepsilon \in [0,1]$ and if $k$ and $\varepsilon$ change roles, and $\varepsilon $ take the value $\varepsilon
=1$, than by using Theorem \ref{subhomogenost}  we have $$\Delta_{X,\phi}(\varepsilon)\leq \varepsilon
\Delta_{X,\phi}(1)\leq \varepsilon \ .$$ If $\varepsilon \in (1,\phi(\overline{B}_X)]$, than the monotonicity of the
modulus $\Delta_{X,\phi}(\varepsilon)$ provides that
$$\Delta_{X,\phi}(\varepsilon)\leq \Delta_{X,\phi}(\phi(\overline{B}_X))=1<\varepsilon \ .$$
\end{proof}

\begin{corollary}
Let $\phi$ be a strictly minimalizable measure of noncompactness defined on the space $X$ with Radon-Nikodym property.
The function $\displaystyle f(\varepsilon)=\frac{\Delta_{X,\phi}(\varepsilon)}{\varepsilon}$ is nondecreasing on the
interval $[0,\phi(\overline{B}_X)]$ and for $\varepsilon_1+\varepsilon_2\leq \phi(\overline{B}_X)$ it holds
\begin{equation}\label{poslj3_nejednakost}
\Delta_{X,\phi}(\varepsilon_1+\varepsilon_2)\geq \Delta_{X,\phi}(\varepsilon_1)+\Delta_{X,\phi}(\varepsilon_2).
\end{equation}
\end{corollary}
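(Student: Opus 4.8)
The plan is to obtain both assertions as immediate consequences of the subhomogeneity inequality (\ref{th_subhomogenost_nejednakost}) of Theorem \ref{subhomogenost}; no new geometric construction in $X$ is needed.

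\emph{Monotonicity of $f$.} First I would fix $\varepsilon_1,\varepsilon_2$ with $0<\varepsilon_1\le\varepsilon_2\le\phi(\overline{B}_X)$ and put $k=\varepsilon_1/\varepsilon_2\in(0,1]$. Applying Theorem \ref{subhomogenost} with this $k$ and with $\varepsilon_2$ in the role of $\varepsilon$ gives
$$\Delta_{X,\phi}(\varepsilon_1)=\Delta_{X,\phi}(k\varepsilon_2)\le k\,\Delta_{X,\phi}(\varepsilon_2)=\frac{\varepsilon_1}{\varepsilon_2}\,\Delta_{X,\phi}(\varepsilon_2),$$
and dividing by $\varepsilon_1>0$ yields $f(\varepsilon_1)\le f(\varepsilon_2)$. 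Hence $f$ is nondecreasing on $(0,\phi(\overline{B}_X)]$. At the left endpoint one records the conventional value $f(0)=0$, which is consistent since $\Delta_{X,\phi}\equiv 0$ (hence $f\equiv 0$) on $[0,\varepsilon_\phi(X)]$, so the stated monotonicity on the whole interval $[0,\phi(\overline{B}_X)]$ follows.

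\emph{Superadditivity.} Next I would let $\varepsilon_1,\varepsilon_2>0$ with $\varepsilon_1+\varepsilon_2\le\phi(\overline{B}_X)$. Since $\varepsilon_i\le\varepsilon_1+\varepsilon_2$ and $f$ is nondecreasing,
$$\Delta_{X,\phi}(\varepsilon_i)=\varepsilon_i\,f(\varepsilon_i)\le\varepsilon_i\,f(\varepsilon_1+\varepsilon_2),\qquad i=1,2,$$
and summing these two inequalities gives
$$\Delta_{X,\phi}(\varepsilon_1)+\Delta_{X,\phi}(\varepsilon_2)\le(\varepsilon_1+\varepsilon_2)\,f(\varepsilon_1+\varepsilon_2)=\Delta_{X,\phi}(\varepsilon_1+\varepsilon_2),$$
which is precisely (\ref{poslj3_nejednakost}). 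The degenerate cases $\varepsilon_1=0$ or $\varepsilon_2=0$ are trivial because $\Delta_{X,\phi}(0)=0$.

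There is no genuine obstacle here: every step is a one-line deduction from Theorem \ref{subhomogenost}, and the superadditivity is just the standard fact that a function whose quotient by the identity is nondecreasing is superadditive. The only point that deserves a sentence of care is the interpretation of $f$ at $\varepsilon=0$ (the quotient $\Delta_{X,\phi}(0)/0$), which I handle by the convention $f(0)=0$ together with the observation that $f$ vanishes identically on $[0,\varepsilon_\phi(X)]$.
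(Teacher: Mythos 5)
Your proposal is correct and follows essentially the same route as the paper: both derive the monotonicity of $f$ from the subhomogeneity inequality (\ref{th_subhomogenost_nejednakost}) with $k=\varepsilon_1/\varepsilon_2$, and both obtain (\ref{poslj3_nejednakost}) by applying that monotonicity to compare $\varepsilon_i$ with $\varepsilon_1+\varepsilon_2$. Your treatment of the endpoint $\varepsilon=0$ is a small point of extra care that the paper omits, but the substance is identical.
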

\begin{proof}
Let $\varepsilon_1$, $\varepsilon_2\in [0,\phi (\overline{B}_X)]$ such that $\varepsilon_1\leq \varepsilon_2$. We put
$\displaystyle k=\frac{\varepsilon_1}{\varepsilon_2}$. Then
$$f(\varepsilon_1)=\frac{\Delta_{X,\phi}(\varepsilon_1)}{\varepsilon_1}=
\frac{\Delta_{X,\phi}(k\varepsilon_2)}{\varepsilon_1}\ .$$ If we use a property of subhomegeneity of the function
$\Delta_{X,\phi}(\varepsilon)$ we have $$f(\varepsilon_1)\leq
\frac{\Delta_{X,\phi}(\varepsilon_2)}{\varepsilon_2}=f(\varepsilon_2)\ ,$$ which proves that $f(\varepsilon)$ is a
nondecreasing function on the interval $[0,\phi(B_X)]$. Furthermore

 \begin{eqnarray*}
\Delta_{X,\phi}(\varepsilon_1)+\Delta_{X,\phi}(\varepsilon_2) & \leq & k\Delta_{X,\phi}(\varepsilon_2)+\Delta_{X,\phi}(\varepsilon_2) \\
 & = & \frac{\varepsilon _1+\varepsilon_2}{\varepsilon_2}\Delta_{X,\phi}(\varepsilon_2) \\
 & \leq & (\varepsilon _1+\varepsilon_2)\frac{\Delta_{X,\phi}(\varepsilon_1+\varepsilon _2)}{\varepsilon_1+\varepsilon_2}\\
 & = & \Delta_{X,\phi}(\varepsilon_1+\varepsilon _2).
\end{eqnarray*}
So, inequality (\ref{poslj3_nejednakost}) is proved.
\end{proof}

\begin{corollary}
Let $\phi$ be a strictly minimalizable measure of noncompactness defined on the space $X$ with Radon-Nikodym property.
\begin{equation}{\label{poslj4_nejednakost}}
\frac{\Delta_{X,\phi}(\varepsilon_2)-\Delta_{X,\phi}(\varepsilon_1)}{\varepsilon_2-\varepsilon_1}\geq
\frac{\Delta_{X,\phi}(\varepsilon_2)}{\varepsilon_2}.
\end{equation} holds for all $\varepsilon_1,\varepsilon_2\in (\varepsilon_1(X),\phi(\overline{B}_X)]$, $\varepsilon_1\leq \varepsilon_2$.
\end{corollary}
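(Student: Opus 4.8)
The plan is to derive (\ref{poslj4_nejednakost}) directly from the subhomogeneity inequality (\ref{th_subhomogenost_nejednakost}) of Theorem \ref{subhomogenost} by a short algebraic rearrangement; no new geometric input is needed.

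First I would fix $\varepsilon_1,\varepsilon_2\in(\varepsilon_{\phi}(X),\phi(\overline{B}_X)]$ with $\varepsilon_1<\varepsilon_2$ (when $\varepsilon_1=\varepsilon_2$ the left-hand side of (\ref{poslj4_nejednakost}) is not defined, so there is nothing to prove). Since $\varepsilon_{\phi}(X)\geq 0$, both $\varepsilon_1$ and $\varepsilon_2$ are strictly positive and $\varepsilon_2-\varepsilon_1>0$, so the two quotients appearing in (\ref{poslj4_nejednakost}) are legitimate. Multiplying the asserted inequality by $\varepsilon_2-\varepsilon_1>0$ and collecting terms, I would observe that (\ref{poslj4_nejednakost}) is equivalent to
$$\Delta_{X,\phi}(\varepsilon_1)\leq\frac{\varepsilon_1}{\varepsilon_2}\,\Delta_{X,\phi}(\varepsilon_2).$$

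Next I would set $\displaystyle k=\frac{\varepsilon_1}{\varepsilon_2}$. Since $0<\varepsilon_1\leq\varepsilon_2$ we have $k\in(0,1]$ and $k\varepsilon_2=\varepsilon_1\in[0,\phi(\overline{B}_X)]$, so Theorem \ref{subhomogenost} applies with this $k$ and with $\varepsilon=\varepsilon_2$ and gives $\Delta_{X,\phi}(\varepsilon_1)=\Delta_{X,\phi}(k\varepsilon_2)\leq k\,\Delta_{X,\phi}(\varepsilon_2)$, which is precisely the displayed inequality. Reversing the rearrangement yields (\ref{poslj4_nejednakost}). Equivalently, the displayed inequality is nothing but the relation $f(\varepsilon_1)\leq f(\varepsilon_2)$ from the previous corollary, multiplied through by $\varepsilon_1>0$.

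I do not anticipate a genuine obstacle: the entire argument consists of the equivalence above together with an application of Theorem \ref{subhomogenost}. The only points requiring care are that the hypothesis $\varepsilon_1,\varepsilon_2>\varepsilon_{\phi}(X)\geq 0$ is exactly what makes the divisions valid, and that the statement should be read with $\varepsilon_1<\varepsilon_2$, since otherwise the difference quotient on the left of (\ref{poslj4_nejednakost}) is meaningless.
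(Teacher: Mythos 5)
Your proof is correct and is essentially the paper's own argument: both reduce the claim, via $k=\varepsilon_1/\varepsilon_2$, to the subhomogeneity inequality $\Delta_{X,\phi}(k\varepsilon_2)\leq k\Delta_{X,\phi}(\varepsilon_2)$ of Theorem \ref{subhomogenost}, differing only in whether one rearranges the target inequality first or bounds the difference $\Delta_{X,\phi}(\varepsilon_2)-\Delta_{X,\phi}(\varepsilon_1)$ from below directly. Your added remarks on why the divisions are legitimate and on the degenerate case $\varepsilon_1=\varepsilon_2$ are sensible clarifications the paper omits.
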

\begin{proof}
Let $\displaystyle k=\frac{\varepsilon_1}{\varepsilon_2}\leq 1$. From the Theorem \ref{subhomogenost} we have
\begin{eqnarray*}
 \Delta_{X,\phi}(\varepsilon_2)-\Delta_{X,\phi}(\varepsilon_1) & = &
 \Delta_{X,\phi}(\varepsilon_2)-\Delta_{X,\phi}(k\varepsilon_2) \\
  & \geq & \Delta_{X,\phi}(\varepsilon_2)-k\Delta_{X,\phi}(\varepsilon_2) \\
  & = & \frac{\varepsilon_2-\varepsilon_1}{\varepsilon_2}\Delta_{X,\phi}(\varepsilon_2).
 \end{eqnarray*}
Thus the inequality (\ref{poslj4_nejednakost}) is proved.
\end{proof}

\section{Continuity of the modulus of noncompact convexity}

In this section we shall prove continuity of the modulus of noncompact convexity associated with arbitrary strictly
minimalizable measure of noncompactness, defined on Banach space with Radon-Nikodym property. Now we need the following
Lemmas.

\begin{lemma}\label{th3.1_lema}
Let $\phi$ be an arbitrary measure of noncompactness, $B\subset \overline{B}_X$ $\phi$-minimal set and $k>0$ arbitrary.
Then the set $kB$ is $\phi$-minimal.
\end{lemma}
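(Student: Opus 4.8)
The plan is to deduce $\phi$-minimality of $kB$ from that of $B$ by a straightforward rescaling, the only nontrivial ingredient being the homogeneity relation $\phi(kA)=k\phi(A)$ for a measure of noncompactness --- the same property already invoked in the proof of Theorem~\ref{subhomogenost}.

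First I would record that, since $k>0$, the dilation $T_k\colon x\mapsto kx$ is a bijection of $X$ onto itself carrying $B$ onto $kB$; hence $kB$ is again infinite, and since $B\subseteq\overline{B}_X$ we have $kB\subseteq k\overline{B}_X$, so $kB$ is bounded and $\phi(kB)$ is well defined. Then I would take an arbitrary infinite set $C\subseteq kB$ and note that $k^{-1}C=T_{1/k}(C)$ is an infinite subset of $B$. Since $B$ is $\phi$-minimal, $\phi(k^{-1}C)=\phi(B)$, and applying homogeneity on both sides gives $\phi(C)=k\,\phi(k^{-1}C)=k\,\phi(B)=\phi(kB)$. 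As $C$ was an arbitrary infinite subset of $kB$, every infinite subset of $kB$ has the same $\phi$-value as $kB$, which is exactly the assertion that $kB$ is $\phi$-minimal.

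The argument is short, so there is no genuine computational obstacle; the one point worth flagging is that it rests on the homogeneity $\phi(kA)=k\phi(A)$, which is not among the three defining properties of a measure of noncompactness recalled in Section~2, but is a standard additional property (shared by $\alpha$, $\beta$, $\chi$) and is already used in the proof of Theorem~\ref{subhomogenost}. I would therefore simply cite it as a standing assumption and present the lemma's proof as the two-line rescaling computation above.
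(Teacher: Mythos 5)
Your proposal is correct and follows essentially the same route as the paper's proof: take an arbitrary infinite subset of $kB$, rescale it by $1/k$ into $B$, invoke $\phi$-minimality of $B$, and use the homogeneity $\phi(kA)=k\phi(A)$ to conclude. Your remark that homogeneity is not among the three listed axioms but is used as a standing assumption is accurate --- the paper relies on it in exactly the same way without stating it explicitly.
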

\begin{proof}
Let $B$ be $\phi$-minimal subset of closed unit ball and $D\subset kB$ arbitrary infinite set for $k>0$. If
$\displaystyle y\in \frac{1}{k}D$, then $\displaystyle y=\frac{1}{k}y'$ for some $y'\in D$. Since $y'\in kB$, then
$y'=kx$ for some $x\in B$. So $\displaystyle y=\frac{1}{k}kx=x\in B$, therefore $\displaystyle \frac{1}{k}D\subset B$.
Since $B$ is $\phi$-minimal set we have
$$\phi (B)=\phi \left (\frac{1}{k}D\right )=\frac{1}{k}\phi (D),$$ that is $\phi (D)=k\phi (B)=\phi (kB)$. Thus $kB$ is $\phi
$-minimal set.
\end{proof}

\begin{lemma} \label{th3.2_lema}
Let $X$ be a Banach space with Radon-Nikodym property and $\phi$ a strictly minimalizable measure of noncompactness.
The modulus of noncompact convexity $\Delta_{X,\phi}(\varepsilon)$ is left continuous function on the interval $[0,\phi
(\overline{B}_X))$.
\end{lemma}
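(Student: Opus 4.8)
The plan is to combine the monotonicity of $\Delta_{X,\phi}$ with the behaviour of $\phi$-minimal sets under dilations (Lemma \ref{th3.1_lema}), using these to transport an almost extremal configuration for a parameter $\varepsilon<\varepsilon_{0}$ to one for $\varepsilon_{0}$ at a cost that vanishes as $\varepsilon\uparrow\varepsilon_{0}$. Directly from the definition, $\varepsilon_{1}\le\varepsilon_{2}$ implies that every set admissible for $\Delta_{X,\phi}(\varepsilon_{2})$ is admissible for $\Delta_{X,\phi}(\varepsilon_{1})$, so $\Delta_{X,\phi}$ is nondecreasing on $[0,\phi(\overline{B}_X)]$. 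Consequently, for each $\varepsilon_{0}\in(0,\phi(\overline{B}_X))$ the left limit $\Lambda:=\lim_{\varepsilon\to\varepsilon_{0}^{-}}\Delta_{X,\phi}(\varepsilon)$ exists and $\Lambda\le\Delta_{X,\phi}(\varepsilon_{0})$, and left continuity at $0$ is vacuous; so it suffices to prove the opposite inequality $\Delta_{X,\phi}(\varepsilon_{0})\le\Lambda$.

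Fix such an $\varepsilon_{0}$, fix $\eta>0$, let $\varepsilon\in(0,\varepsilon_{0})$, and choose a closed convex $A\subseteq\overline{B}_X$ with $\phi(A)\ge\varepsilon$ and $1-d(0,A)<\Delta_{X,\phi}(\varepsilon)+\eta$; put $a:=\phi(A)$. If $a\ge\varepsilon_{0}$, then $A$ is already admissible for $\Delta_{X,\phi}(\varepsilon_{0})$, giving $\Delta_{X,\phi}(\varepsilon_{0})\le1-d(0,A)<\Delta_{X,\phi}(\varepsilon)+\eta$, which is stronger than needed. So assume $a\in[\varepsilon,\varepsilon_{0})$ and set $t:=\varepsilon_{0}/a\in(1,\varepsilon_{0}/\varepsilon]$. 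By strict minimalizability there is a $\phi$-minimal $B\subseteq A$ with $\phi(B)=a$; by Lemma \ref{th3.1_lema} the set $tB$ is $\phi$-minimal with $\phi(tB)=ta=\varepsilon_{0}$, and $tB\subseteq\overline{B}(0,t)$ since $B\subseteq\overline{B}_X$. The key claim is that from $tB$ one can build a closed convex $A^{*}\subseteq\overline{B}_X$ with $\phi(A^{*})\ge\varepsilon_{0}$ and $d(0,A^{*})\ge d(0,A)-\omega(t-1)$ for some modulus $\omega$ with $\omega(s)\to0$ as $s\to0^{+}$. Granting this, $A^{*}$ is admissible for $\Delta_{X,\phi}(\varepsilon_{0})$, so
\[
\Delta_{X,\phi}(\varepsilon_{0})\le1-d(0,A^{*})\le1-d(0,A)+\omega(t-1)<\Delta_{X,\phi}(\varepsilon)+\eta+\omega(t-1).
\]
Letting $\varepsilon\uparrow\varepsilon_{0}$ forces $a\uparrow\varepsilon_{0}$, hence $t\downarrow1$, $\omega(t-1)\downarrow0$ and $\Delta_{X,\phi}(\varepsilon)\to\Lambda$, so $\Delta_{X,\phi}(\varepsilon_{0})\le\Lambda+\eta$; finally $\eta\downarrow0$ yields $\Delta_{X,\phi}(\varepsilon_{0})\le\Lambda$.

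The construction of $A^{*}$ is the core of the argument. If $B$ contains infinitely many points of $\overline{B}(0,1/t)$, set $A^{*}:=t\,\overline{co}\bigl(B\cap\overline{B}(0,1/t)\bigr)$. Then $\overline{co}(B\cap\overline{B}(0,1/t))\subseteq\overline{B}(0,1/t)$ gives $A^{*}\subseteq\overline{B}_X$; since $B\subseteq A=coA=\overline{A}$ we get $\overline{co}(B\cap\overline{B}(0,1/t))\subseteq A$, hence $d(0,A^{*})=t\,d\bigl(0,\overline{co}(B\cap\overline{B}(0,1/t))\bigr)\ge t\,d(0,A)\ge d(0,A)$; and $\phi$-minimality of $B$, the invariance of $\phi$ under the closed convex hull and its homogeneity give $\phi(A^{*})=t\,\phi\bigl(B\cap\overline{B}(0,1/t)\bigr)=t\,\phi(B)=\varepsilon_{0}$, so here even $\omega\equiv0$ works. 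The hard case --- and the main obstacle --- is when almost every point of $B$ lies in the thin shell $\{x:1/t<\|x\|\le1\}$, so $tB$ lies essentially outside $\overline{B}_X$ and the truncation above fails. In that case one first performs a small inward modification of $A$, in the spirit of the passage from $C$ to $B^{*}$ in the proof of Theorem \ref{subhomogenost}: one replaces $B$ by a translate $B-\rho v$ with $\|v\|=1$ and $\rho=O(t-1)$, chosen so that $B-\rho v\subseteq\overline{B}(0,1/t)$, the admissible direction $v$ being produced from a Hahn-Banach functional separating $A$ from the open ball $\{x:\|x\|<d(0,A)\}$ (so that $A$, hence $B$, sits in a cap of $\overline{B}_X$), and then one takes $A^{*}:=t\,\overline{co}(B-\rho v)$. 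Translation invariance, homogeneity and hull invariance of $\phi$ give $\phi(A^{*})=t\,\phi(B)=\varepsilon_{0}$; while $\overline{co}(B)\subseteq A$ and $\|\rho v\|=\rho$ give $d(0,A^{*})=t\,d\bigl(0,\overline{co}(B)-\rho v\bigr)\ge t\bigl(d(0,A)-\rho\bigr)\ge d(0,A)-O(t-1)$, so $\omega(s)=O(s)$ suffices. The point at which the Radon-Nikodym hypothesis enters is precisely the verification that a translation of size $O(t-1)$ genuinely pushes the relevant part of $A$ inside $\overline{B}(0,1/t)$ --- handled through dentability exactly as in Theorem \ref{subhomogenost} --- and, together with the routine homogeneity, translation- and hull-invariance of $\phi$, this is the only nontrivial step remaining.
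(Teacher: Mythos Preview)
The overall architecture—monotonicity gives the left limit $\Lambda\le\Delta_{X,\phi}(\varepsilon_{0})$, and one must manufacture from an almost–extremal set for $\varepsilon$ an admissible set for $\varepsilon_{0}$ with nearly the same distance to the origin—is sound, and your Case~1 is fine. The gap is entirely in Case~2, which you yourself flag as ``the main obstacle'' but then dispose of too quickly.

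The assertion that a single translation $B\mapsto B-\rho v$ with $\rho=O(t-1)$ can force $B-\rho v\subseteq\overline{B}(0,1/t)$ is not true in general, and neither the Hahn--Banach separation nor dentability repairs it. The separation gives a norm-one functional $f$ with $f\ge d(0,A)$ on $A$, so $B$ lies in the cap $\{x\in\overline{B}_X:f(x)\ge d(0,A)\}$; but outside Hilbert space such a cap can have diameter close to $2$ even when $d(0,A)$ is close to $1$, and for points $x\in B$ lying ``transversally'' to $v$ one has $\|x-\rho v\|\ge\|x\|$, not $\le\|x\|-\rho$. Thus no choice of $v$ and $\rho=O(t-1)$ pushes all of $B$ into the smaller ball. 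Your appeal to ``dentability exactly as in Theorem~\ref{subhomogenost}'' does not close this: in that proof dentability is used only to produce $x_{0}\notin\overline{co}\bigl[B\setminus\overline{B}(x_{0},r)\bigr]$, so that excising a small ball leaves a \emph{proper} closed convex subset; it is never used to make a translation shrink norms. The translation in Theorem~\ref{subhomogenost} is outward (adding $\tfrac{1-k}{\|x_{0}\|}x_{0}$) and succeeds by the triangle inequality alone—there is no inward analogue available.

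The paper circumvents this with a different device. After forming the dented set $C=\overline{co}\bigl[B\setminus\overline{B}(x_{0},r)\bigr]$ it \emph{dilates} by $k=1+\tfrac12\bigl(1-d(0,C)\bigr)>1$ and sets $A^{*}=kC\cap\overline{B}_X$; the intersection with $\overline{B}_X$ replaces your translation, and $\phi$-minimality of $kB$ (Lemma~\ref{th3.1_lema}) is used to read off $\phi(A^{*})=k\phi(B)\ge k\varepsilon$. The crucial feature is that $k$ depends on $d(0,C)$, not on $\varepsilon$, so one can first fix $\delta=\varepsilon_{0}(1-1/k)$ and then for every $\varepsilon\in(\varepsilon_{0}-\delta,\varepsilon_{0})$ obtain $\phi(A^{*})\ge k\varepsilon>\varepsilon_{0}$, while $d(0,A^{*})\ge d(0,kC)>d(0,C)$. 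This dilate-and-intersect step is what actually handles your ``hard case''; you should either adopt it or replace your Case~2 by a construction that does not rely on a norm-reducing translation.
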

\begin{proof}
Let $\varepsilon_0\in [0,\phi (\overline{B}_X))$ be arbitrary and let $\varepsilon <\varepsilon_0$. From the definition
of $\Delta_{X,\phi}(\varepsilon)$, for arbitrary $\eta >0$ there exists convex and closed set $A\subset
\overline{B}_X$, $\phi (A)\geq \varepsilon$ such that $$1-d(0,A)<\Delta_{X,\phi}(\varepsilon)+\eta \ .$$ Since $\phi$
is strictly minimalizable measure of noncompactness there exists infinite $\phi$-minimal set $B\subset A$ such that
$\phi (A)=\phi (B)\geq \varepsilon $. Moreover, inequality $$1-d(0,B)\leq 1-d(0,A)<\Delta_{X,\phi}(\varepsilon)+\eta \
$$ holds for the set $B$. Let $n_0\in \mathbb{N}$ be such that $\displaystyle \frac{\varepsilon}{n_0}<\frac{diam
B}{2}$. Since $B$ is a bounded subset of the Banach space $X$ with Radon-Nikodym property (\cite{Phelps}) we conclude
that for $\displaystyle r=\frac{\varepsilon }{n_0}$ there exists $x_0\in B$ such that $$\displaystyle x_0\notin
\overline{co}\left [B\backslash \overline{B}\left (x_0,r \right )\right ] \ .$$ This means that there is a convex and
closed set $\displaystyle C=\overline{co}\left [B\backslash \overline{B}\left (x_0,r \right )\right ] \subset A$, where
$$1-d(0,C)\leq 1-d(0,A)<\Delta_{X,\phi}(\varepsilon )+\eta \ .$$
Since $\phi (B)\geq \varepsilon $ we have that $\displaystyle B\backslash \overline{B}\left (x_0,r \right )$ is an
infinite set and using properties of the strictly minimalizable measure of noncompactness $\phi$ we obtain
$$\phi (C)=\phi \left [B\backslash \overline{B}\left (x_0,r \right )\right ]=\phi (B)\geq \varepsilon \ .$$
Let $\displaystyle k=1+\frac{1-d(0,C)}{2}$. We will consider set $\displaystyle A^*=kC \cap \overline{B}_X$. $A^*$ is a
closed and convex set such that $A^*\subseteq kC\subset kB$. From the Lemma \ref{th3.1_lema} we have $$\phi (A^*)=\phi
(kB)=k\phi (B)\geq k\varepsilon \ .$$ Moreover, inequality
$$1-d(0,A^*)\leq 1-d(0,kC)<1-d(0,C) \ ,$$
holds, i.e.
\begin{equation}{\label{th3.1_nejdn_1}}
1-d(0,A^*)< \Delta_{X,\phi}(\varepsilon )+\eta \ .
\end{equation}
Let $\displaystyle \delta =\varepsilon_0\left (1-\frac{1}{k}\right )$. For $\varepsilon \in
(\varepsilon_0-\delta,\varepsilon_0)$ we have
$$\phi (A^*)\geq k\varepsilon>k(\varepsilon_0-\delta)=k\frac{\varepsilon_0}{k}=\varepsilon _0 \ .$$
If we take infimum in (\ref{th3.1_nejdn_1}) by all the sets $A^*$ such that  $\phi (A^*)>\varepsilon _0$, we obtain
$$\inf \{ 1-d(0,A^*): A^*\subset \overline{B}_X, A^*=\overline{co}(A^*), \phi (A^*)>\varepsilon_0 \}\leq
\Delta_{X,\phi}(\varepsilon)+\eta \ ,$$ that is
$$\Delta_{X,\phi}(\varepsilon_0)\leq \Delta_{X,\phi}(\varepsilon)+\eta \ .$$ Since $\eta>0$ was arbitrarily small we conclude
$\displaystyle \lim_{\varepsilon \rightarrow
\varepsilon_{0_-}}\Delta_{X,\phi}(\varepsilon)=\Delta_{X,\phi}(\varepsilon_0) \ .$
\end{proof}

\begin{lemma} \label{th3.3_lema}
Let $X$ be a Banach space with Radon-Nikodym property and $\phi $ a strictly minimalizable measure of noncompactness.
The function $\Delta_{X,\phi}(\varepsilon)$ is right continuous on the interval $[0,\phi(\overline{B}_X))$.
\end{lemma}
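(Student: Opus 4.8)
The plan is to combine the monotonicity of $\Delta_{X,\phi}$ with the dilation counterpart of the construction used in Lemma \ref{th3.2_lema}. Since $\Delta_{X,\phi}$ is nondecreasing, for every $\varepsilon_0\in[0,\phi(\overline{B}_X))$ the right-hand limit $\lim_{\varepsilon\to\varepsilon_0^+}\Delta_{X,\phi}(\varepsilon)$ exists and is at least $\Delta_{X,\phi}(\varepsilon_0)$, so I would reduce the statement to the following claim: for every $\eta>0$ there is some $\varepsilon_1\in(\varepsilon_0,\phi(\overline{B}_X)]$ with $\Delta_{X,\phi}(\varepsilon_1)\le\Delta_{X,\phi}(\varepsilon_0)+\eta$. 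Indeed, monotonicity then gives $\Delta_{X,\phi}(\varepsilon)\le\Delta_{X,\phi}(\varepsilon_1)\le\Delta_{X,\phi}(\varepsilon_0)+\eta$ for all $\varepsilon\in(\varepsilon_0,\varepsilon_1]$, and letting $\eta\downarrow0$ finishes the proof. The endpoint $\varepsilon_0=0$ is disposed of immediately by $\Delta_{X,\phi}(\varepsilon)\le\varepsilon$, and if $\Delta_{X,\phi}(\varepsilon_0)=1$ then $\Delta_{X,\phi}\equiv1$ on $[\varepsilon_0,\phi(\overline{B}_X)]$ and there is nothing to prove; so I may assume $\varepsilon_0\in(0,\phi(\overline{B}_X))$ and $\Delta_{X,\phi}(\varepsilon_0)<1$. (For $\varepsilon_0\in(\varepsilon_\phi(X),\phi(\overline{B}_X))$ a shortcut is available: inequality (\ref{poslj4_nejednakost}) with $\varepsilon_2=\varepsilon_0+h$ and $\varepsilon_1=\varepsilon_0$ yields $0\le\Delta_{X,\phi}(\varepsilon_0+h)-\Delta_{X,\phi}(\varepsilon_0)\le\frac{h}{\varepsilon_0}\Delta_{X,\phi}(\varepsilon_0)\to0$; but the construction below handles all $\varepsilon_0$ at once.)

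To prove the claim I would fix $\eta\in(0,1-\Delta_{X,\phi}(\varepsilon_0))$ and, by the definition of the modulus, a closed convex set $A\subseteq\overline{B}_X$ with $\phi(A)\ge\varepsilon_0$ and $1-d(0,A)<\Delta_{X,\phi}(\varepsilon_0)+\eta$, and then argue as in the first part of the proof of Lemma \ref{th3.2_lema}: apply strict minimalizability to obtain an infinite $\phi$-minimal set $B\subseteq A$ with $\phi(B)=\phi(A)$, pick $n_0\in\mathbb{N}$ with $\varepsilon_0/n_0<\operatorname{diam}B/2$, and use the Radon-Nikodym property at $r=\varepsilon_0/n_0$ to find $x_0\in B$ with $x_0\notin C:=\overline{co}[B\setminus\overline{B}(x_0,r)]$. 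This produces a closed convex set $C\subseteq B\subseteq\overline{B}_X$ with $\phi(C)=\phi\big(B\setminus\overline{B}(x_0,r)\big)=\phi(B)\ge\varepsilon_0$ and $1-d(0,C)\le1-d(0,A)<\Delta_{X,\phi}(\varepsilon_0)+\eta$; in particular $d(0,C)\ge d(0,A)>1-\Delta_{X,\phi}(\varepsilon_0)-\eta>0$, and I would check that $d(0,C)<1$, so that a genuine dilation factor $k>1$ can be chosen in the next step.

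Then comes the dilation. I would take $k>1$ with $k\,d(0,C)<1$ and $k\varepsilon_0\le\phi(\overline{B}_X)$ --- for instance any $k$ in the nonempty interval $\big(1,\min\{1+\tfrac12(1-d(0,C)),\ \phi(\overline{B}_X)/\varepsilon_0\}\big]$ --- and set $A^*:=kC\cap\overline{B}_X$ and $\varepsilon_1:=k\varepsilon_0\in(\varepsilon_0,\phi(\overline{B}_X)]$. The set $A^*$ is closed, convex and contained in $\overline{B}_X$, and it is infinite because $k\,d(0,C)<1$ forces $C\cap\overline{B}_{1/k}$, and hence $A^*$, to contain a nondegenerate segment contained in $C$. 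Since $A^*\subseteq kC\subseteq kB$ and $kB$ is $\phi$-minimal by Lemma \ref{th3.1_lema}, I get $\phi(A^*)=\phi(kB)=k\phi(B)\ge k\varepsilon_0=\varepsilon_1$, while $A^*\subseteq kC$ gives $d(0,A^*)\ge k\,d(0,C)\ge d(0,C)$ and therefore $1-d(0,A^*)\le1-d(0,C)<\Delta_{X,\phi}(\varepsilon_0)+\eta$. Hence $A^*$ is admissible in the infimum defining $\Delta_{X,\phi}(\varepsilon_1)$, so $\Delta_{X,\phi}(\varepsilon_1)\le1-d(0,A^*)<\Delta_{X,\phi}(\varepsilon_0)+\eta$, which is exactly the claim.

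The step I expect to be the main obstacle is the measure estimate $\phi(A^*)=\phi(kB)=k\phi(B)\ge\varepsilon_1$: it hinges on the inclusion $A^*\subseteq kB$ (hence on $C$ being a subset of $B$), on the $\phi$-minimality of the dilate $kB$ provided by Lemma \ref{th3.1_lema}, and on verifying that $A^*$ is genuinely infinite so that $\phi$-minimality may be invoked; one also has to make sure the dilation factor $k>1$ is compatible with $d(0,C)$, i.e. that $d(0,C)<1$ --- this is exactly where the geometric content of the Radon-Nikodym property, already exploited in Lemma \ref{th3.2_lema}, is essential, now used to push the measure slightly above $\varepsilon_0$ rather than to keep it above a fixed level. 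The remaining inequalities, namely $1-d(0,A^*)\le1-d(0,C)$ and the two-sided bound $0<d(0,C)<1$, are then routine.
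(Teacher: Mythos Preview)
Your argument is essentially the paper's own proof. The paper too fixes $\eta>0$, chooses a near-optimal closed convex $A\subseteq\overline{B}_X$, extracts a $\phi$-minimal $B\subset A$ by strict minimalizability, uses dentability to pass to $C=\overline{co}[B\setminus\overline{B}(x_0,r)]$, and then dilates by $k=1+\tfrac12(1-d(0,C))$ and intersects with $\overline{B}_X$, invoking Lemma~\ref{th3.1_lema} exactly as you do to get $\phi(B^*)=k\phi(B)$. The only structural differences are cosmetic: the paper first disposes of the easy case $\phi(C)>\varepsilon_0$ before dilating, whereas you dilate in all cases; and you add some housekeeping the paper leaves implicit (the edge cases $\varepsilon_0=0$ and $\Delta_{X,\phi}(\varepsilon_0)=1$, the check that $A^*$ is infinite, and the constraint $k\varepsilon_0\le\phi(\overline{B}_X)$).

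One point you rightly single out as the crux is the chain $A^*\subseteq kC\subseteq kB$, which rests on $C\subseteq B$. Note that $C=\overline{co}[B\setminus\overline{B}(x_0,r)]$ is the closed convex hull of a subset of the (not necessarily convex) $\phi$-minimal set $B$, so the inclusion $C\subseteq B$ is not automatic from what has been written down. The paper makes the identical assertion ($B^*\subseteq kC\subset kB$) without further comment, so your proposal matches it here as well; but since you flag this as ``the main obstacle'', be aware that it is precisely the place where both arguments are thinnest.
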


\begin{proof}
Let $\eta >0$ and $\varepsilon _0\in [0,\phi (\overline{B}_X))$. From the definition of the modulus of noncompact
convexity $\Delta_{X,\phi}(\varepsilon)$, there exists convex and closed set $A\subset \overline{B}_X$, $\phi (A)\geq
\varepsilon _0$, such that $$1-d(0,A)<\Delta_{X,\phi}(\varepsilon _0)+\eta \ .$$ Since $\phi$ is strictly minimalizable
measure we conclude that there exists infinite, $\phi$-minimal set $B\subset A$ such that $\phi (A)=\phi (B)\geq
\varepsilon _0$ and
$$1-d(0,B)\leq 1-d(0,A)<\Delta_{X,\phi}(\varepsilon _0)+\eta \ .$$
The set $B$ is bounded subset of the Banach space that has Radon-Nikodym property, and because of that for
$\displaystyle r=\frac{\varepsilon_0}{n_0}$, where $n_0\in \mathbb{N}$ is such that $\displaystyle
\frac{\varepsilon_0}{n_0}<\frac{diam B}{2}$ we can find $x_0\in B$ such that $$\displaystyle x_0\notin
\overline{co}\left [B\backslash \overline{B}\left (x_0,r \right )\right] \ .$$ This implies that there is convex and
closed set $\displaystyle C=\overline{co}\left [B\backslash \overline{B}\left (x_0,r \right )\right ] \subset A$, where
$$1-d(0,C)\leq 1-d(0,A)<\Delta_{X,\phi}(\varepsilon_0 )+\eta \ .$$
Hence, by the properties of the strictly minimalizable measure of noncompactness $\phi$ we obtain
$$\phi (C)=\phi \left [B\backslash \overline{B}\left (x_0,r\right )\right ]=\phi (B)\geq \varepsilon_0 \ .$$
\begin{enumerate}
\item  If $\phi (C)>\varepsilon _0$, let $\delta =\phi (C)-\varepsilon_0$ and consider arbitrary $\varepsilon \in
(\varepsilon_0,\varepsilon_0+\delta)$. Then $\phi (C)>\varepsilon$, hence $$\inf \{ 1-d(0,C): C\subset \overline{B}_X,
C=\overline{co}C, \phi (C)>\varepsilon\}\leq \Delta_{X,\phi}(\varepsilon_0)+\eta \ .$$
    Moreover,
$$\Delta_{X,\phi}(\varepsilon)\leq \Delta_{X,\phi}(\varepsilon_0)+\eta \ .$$
This completes the proof of the Theorem.

\item Let $\phi (C)=\varepsilon_0$. Consider the set $\displaystyle B^*=kC \cap \overline{B}_X$ for $\displaystyle
k=1+\frac{1-d(0,C)}{2}$. $B^*$ is closed, convex set and it holds that $B^*\subseteq kC\subset kB$. Hence by the Lemma
\ref{th3.1_lema} we have
$$\phi (B^*)=\phi (kB)=k\phi (B)= k\varepsilon_0 \ .$$
Moreover, next inequality holds
$$1-d(0,B^*)\leq 1-d(0,kC)<1-d(0,C) \ .$$
Let $\delta =\varepsilon_0(1-k)$. Then for $\varepsilon \in (\varepsilon_0,\varepsilon _0+\delta)$ we have
$\phi(B^*)=k\varepsilon_0>\varepsilon,$ thus
$$\inf \{ 1-d(0,B^*): B^*\subset \overline{B}_X, B^*=\overline{co}B^*, \phi (B^*)>\varepsilon\}\leq \Delta_{X,\phi}(\varepsilon_0)+\eta \ ,$$
that is
$$\Delta_{X,\phi}(\varepsilon)\leq \Delta_{X,\phi}(\varepsilon_0)+\eta \ .$$
Therefore, we obtain $\displaystyle \lim_{\varepsilon \rightarrow
\varepsilon_{0_+}}\Delta_{X,\phi}(\varepsilon)=\Delta_{X,\phi}(\varepsilon_0)$. This completes the proof.
\end{enumerate}
\end{proof}

From Lemma \ref{th3.2_lema} and Lemma \ref{th3.3_lema} follows that:

\begin{theorem}
Let $X$ be a Banach space with Radon-Nikodym property and $\phi$ a strictly minimalizable measure of noncompactness.
The modulus $\Delta_{X,\phi}(\varepsilon)$ is a continuous function on the interval $[0,\phi (\overline{B}_X))$.
\end{theorem}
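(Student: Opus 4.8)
The plan is to read off the statement directly from the two preceding lemmas, since together they already assert one-sided continuity of $\Delta_{X,\phi}$ from both sides at every point of $[0,\phi(\overline{B}_X))$. Concretely, I would fix an arbitrary $\varepsilon_0\in[0,\phi(\overline{B}_X))$ and distinguish whether it is the left endpoint or an interior point of the interval.

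For $\varepsilon_0=0$ the only meaningful one-sided limit is the right-hand one, and Lemma~\ref{th3.3_lema} gives $\lim_{\varepsilon\to 0_+}\Delta_{X,\phi}(\varepsilon)=\Delta_{X,\phi}(0)$, which already establishes continuity at $0$. For $\varepsilon_0\in(0,\phi(\overline{B}_X))$, Lemma~\ref{th3.2_lema} yields $\lim_{\varepsilon\to\varepsilon_{0_-}}\Delta_{X,\phi}(\varepsilon)=\Delta_{X,\phi}(\varepsilon_0)$ and Lemma~\ref{th3.3_lema} yields $\lim_{\varepsilon\to\varepsilon_{0_+}}\Delta_{X,\phi}(\varepsilon)=\Delta_{X,\phi}(\varepsilon_0)$. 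Since both one-sided limits exist and coincide with the value of the function at $\varepsilon_0$, the two-sided limit exists and equals $\Delta_{X,\phi}(\varepsilon_0)$; that is, $\Delta_{X,\phi}$ is continuous at $\varepsilon_0$. As $\varepsilon_0$ was arbitrary, continuity on the whole interval $[0,\phi(\overline{B}_X))$ follows.

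There is essentially no obstacle left at this level: all the substantive work has already been carried out in the two lemmas, namely using the Radon--Nikodym property to extract a dentability point $x_0$ of a $\phi$-minimal set $B$, using strict minimalizability to guarantee $\phi\bigl(B\setminus\overline{B}(x_0,r)\bigr)=\phi(B)$, and rescaling by the factor $k=1+\tfrac{1-d(0,C)}{2}>1$ (together with Lemma~\ref{th3.1_lema}) to produce admissible sets whose measure strictly exceeds $\varepsilon_0$. The only point worth checking when writing the proof is that the estimates in the lemmas genuinely pin the one-sided limits to $\Delta_{X,\phi}(\varepsilon_0)$ and do not merely bound them on one side; if one wishes to make this explicit, the monotonicity of $\Delta_{X,\phi}$ recorded in the corollaries following Theorem~\ref{subhomogenost} supplies the reverse inequalities.
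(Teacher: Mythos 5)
Your proposal is correct and matches the paper exactly: the paper derives this theorem as an immediate consequence of Lemma~\ref{th3.2_lema} (left continuity) and Lemma~\ref{th3.3_lema} (right continuity), with all the substantive work residing in those lemmas, just as you describe. Your additional remark about using monotonicity to pin down the one-sided limits is a sensible sanity check but introduces nothing beyond what the lemmas already assert.
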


It is known that the Hausdorff measure of noncompactness $\chi$ is a strictly minimalizable measure of noncompactness
in the weakly compactly generated Banach spaces (\cite{Ayerbe_knjiga}, Theorem III. 2.7.). Since reflexive spaces are
weakly compactly generated and also have Radon-Nikodym property \cite{RN}, we conclude that the modulus of noncompact
convexity $\Delta_{X,\chi}$, associated to measure of noncompactness $\chi$, is a continuous function on the interval
$[0,\phi (\overline{B}_X))$ in an arbitrary weakly compactly generated space $X$.

{\bf Acknowledgement.} The third author was partially supported by Ministry of Education, Science and Technological
Development of Serbia, Grant No. 174002, Serbia.

\end{document}